\documentclass[reqno, 12pt]{amsart}
\usepackage{graphicx,overpic,epsf,amssymb,amscd,amsfonts}

\newtheorem{thm}{Theorem}[section]

\newtheorem{prop}[thm]{Proposition}
\newtheorem{defn}[thm]{Definition}

\newtheorem{rmk}[thm]{Remark}

\newcommand{\C}{\mathbb{C}}
\newcommand{\R}{\mathbb{R}}
\newcommand{\Z}{\mathbb{Z}}
\newcommand{\Q}{\mathbb{Q}}

\newcommand{\M}{\mathcal{M}}

\newcommand{\del}[1]{{\partial_{#1}}}

\newcommand{\ev}{{\rm ev}}
\newcommand{\vir}{{\rm vir}}

\newcommand{\be}{\begin{enumerate}}
\newcommand{\ee}{\end{enumerate}}
\setlength{\parindent}{0in}
\setlength{\parskip}{1ex plus 0.5ex minus 0.2ex}

\topmargin.3in \textheight8.5in \textwidth6.5in \oddsidemargin0in
\evensidemargin0in

\begin{document}

\title{Gravitational Descendants and Linearized Contact Homology}

\author{Jian He}
\address{Universit\'{e} Libre de Bruxelles, Bruxelles, Belgium}
\email{jianhe@ulb.ac.be}
\date{October, 2012.}

\begin{abstract}
In this paper we prove a recursion relation between the the one-point genus-$0$
gravitational descendants of a Stein domain $(M,\partial M)$. This relation is best 
described by the degree $-2$ map $D$ in the linearized 
contact homology of $\partial M$, arising from the Bourgeois--Oancea 
exact sequence between symplectic homology of $M$ and linearized contact 
homology of $\partial M$. 
All one-point genus-$0$
gravitational descendants can be reduce to the one-point 
Gromov--Witten invariants via iterates of $D$.

\end{abstract}

\maketitle

\section{Introduction} \label{section: introduction}

Let $(M^{2n},\omega)$ be a closed symplectic manifold
with a compatible almost complex structure $J$. Define
$\mathcal{M}_{g,n}(M,\beta)$ to be the moduli space 
of stable $J$-holomorphic maps $u\colon S \to M$ in the homology
class $\beta\in H_2(M)$. 
More precisely, $\mathcal{M}_{g,n}(M,\beta)$ consists of maps 
$u\colon S\to M$ such that 
$$J\circ du = du \circ i$$ 
where
$S := (S; p_1,\dots, p_n; i)$ is a Riemann surface of genus $g$, with
$n$ marked points $\{p_1,\dots, p_n\}$, and complex structure $i$. 
Let $\overline{\mathcal{M}}_{g,n}(M,\beta)$ be the compactification 
in the sense of Gromov.

At each element $u\colon (S; p_1,\dots, p_n;i)\to M$ of $\overline{\mathcal{M}}_{g,n}(M,\beta)$, the cotangent 
space to $S$ at the point $p_i$ is
a complex line. These cotangent 
spaces patch together to form a complex line 
bundle $L_i$ over $\overline{\mathcal{M}}_{g,n}(M,\beta)$,
called the $i$-th tautological line bundle. 
Denote its first Chern class by $\psi_i = c_1(L_i)$.

Given classes $\{\theta_i\}_{i=1}^n$ in $H^*(M)$,
the gravitational descendants of $M$ are defined by
\begin{equation}
\langle\tau_{a_1}(\theta_1)\dots\tau_{a_n}(\theta_n)\rangle_{g,\beta}^M 
:=\int_{[\overline{\mathcal{M}}_{g,n}(M,\beta)]^{\vir}} \ev_1^*(\theta_1)\cup \psi_1^{a_1}\cup\cdots\cup\ev_n^*(\theta_n)\cup \psi_n^{a_n}.
\end{equation}

Gromov--Witten invariants, or correlators, are those where 
$a_1=\dots =a_n=0$.
Descendants are symplectic invariants of $M$, their values are 
independent of the compatible complex structure $J$.

In the framework of Symplectic Field Theory (SFT) of Eliashberg, Givental and Hofer 
\cite{EliashbergGiventalHofer}, the theory of $J$-holomorphic curves, and hence
gravitational descendants, can be generalized to
symplectic manifolds $M$ with contact type boundary $\partial M$. 
The full SFT contains information about all the moduli spaces, and
is therefore difficult to access. However invariants can still be extracted 
by considering various simplifications of the full SFT model.

One such invariant is the linearized 
contact homology of $\partial M$ with respect to the filling $M$, $HC(\partial M)$.
In \cite{Oancea}, Bourgeois and Oancea established a Gysin-type exact
sequence relating the symplectic homology of $M$, $SH(M)$, 
and the linearized contact homology of $\partial M$. Furthermore, the degree $-2$
map $D$ can be described purely by counts of holomorphic curves in $M$, 
without reference to symplectic homology.

\begin{thm}
There exists a long exact sequence  
\begin{equation} \label{eq:intro} 
\ldots \rightarrow SH_{k-(n-3)}^+(M) \rightarrow HC_{k}(\partial M) 
\stackrel D \rightarrow HC_{k-2}(\partial M) \rightarrow SH_{k-1-(n-3)}^+(M) 
\rightarrow \ldots 
\end{equation} 
\end{thm}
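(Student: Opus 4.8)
The plan is to interpret the two outer terms of the sequence as the \emph{non-equivariant} and \emph{$S^1$-equivariant} versions of the positive symplectic homology of $M$, and to recognize the whole sequence as the Gysin sequence associated with the $S^1$-symmetry of closed Reeb orbits. Concretely, I would compute $SH^+(M)$ with a time-independent, Morse--Bott Hamiltonian that is linear at infinity on the completion $\hat M$. Its $1$-periodic orbits split into the constants in the interior, which assemble into $H_*(M)$ and are discarded in passing to $SH^+$, and the nonconstant orbits, which correspond to closed Reeb orbits of $\partial M$ and occur in $S^1$-families $S_\gamma\cong S^1$ coming from reparametrization. The first task is thus to organize these critical circles into a chain complex.

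For the non-equivariant theory one perturbs each $S_\gamma$ by a Morse function, contributing $H_*(S^1)$, i.e.\ two generators $\check\gamma$ (minimum) and $\hat\gamma$ (maximum) with $|\hat\gamma|=|\check\gamma|+1$; the resulting complex $C^+$ computes $SH^+(M)$. For the equivariant theory one instead takes the Borel construction, replacing each $S_\gamma$ by its homotopy quotient and retaining one generator per good Reeb orbit; this should identify $HC(\partial M)$ with the $S^1$-equivariant positive symplectic homology $SH^{S^1,+}(M)$, the differential reducing to the count of rigid holomorphic cylinders in the symplectization that defines the linearized contact differential. Granting these two models, the sequence is purely algebraic: writing the equivariant complex as $C^{S^1,+}=C^+\otimes\mathbb{Q}[u]$ with $\deg u=-2$ and twisted differential, the short exact sequence
\[
0 \longrightarrow C^{S^1,+} \stackrel{\cdot u}{\longrightarrow} C^{S^1,+} \longrightarrow C^+ \longrightarrow 0
\]
induces the long exact sequence, with the connecting map $D$ equal to multiplication by $u$, hence of degree $-2$. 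The shift by $n-3$ is the discrepancy between the symplectic homology grading of an orbit and its $\mu_{CZ}+(n-3)$ grading in $HC(\partial M)$, and is absorbed by matching conventions.

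It remains to give $D$ the geometric description promised in the introduction. The class $u$ is the Euler class of the $S^1$-action, so $D=\cdot u$ is realized by the count of those configurations in which the free $S^1$-marker is constrained --- concretely, rigid holomorphic cylinders (and their broken and glued degenerations) in $\hat M$ between $\gamma$ and $\gamma'$ with both asymptotic markers fixed, a codimension-$2$ condition relative to the contact differential. Verifying that this count is a chain map on the linearized contact complex and that it agrees with $\cdot u$ is where the naturality of the Bourgeois--Oancea construction enters, and it is precisely this curve-counting avatar of $D$ that will feed into the descendant recursion.

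The main obstacle is analytic rather than formal: establishing transversality, compactness, and gluing for the Morse--Bott moduli spaces in both the equivariant and non-equivariant models, so that the two complexes above are well defined and the identification $HC(\partial M)\cong SH^{S^1,+}(M)$ holds on the nose. In particular one must treat bad Reeb orbits correctly, excluding them via the $S^1$-action so that only good orbits generate $HC(\partial M)$, fix coherent orientations so that the block form of the differential is exactly $\partial^{HC}$ on the diagonal and $D$ off it, and control the degenerations of the fixed-marker cylinders that compute $D$ to confirm $D^2=0$ at the level needed for the sequence. Once this Morse--Bott package is in place, exactness is immediate from the algebra above.
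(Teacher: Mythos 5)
The paper does not actually prove this theorem: it is quoted from Bourgeois--Oancea \cite{Oancea}, so there is no in-paper argument to compare yours against. With that caveat, your outline is the right circle of ideas but follows the route of the companion paper \cite{S1symplectic} (identify $HC(\partial M)$ with $SH^{+,S^1}(M)$, model the equivariant complex as $C^+\otimes\Q[u]$ with $\deg u=-2$, and read the sequence off the short exact sequence given by multiplication by $u$), rather than the route taken in \cite{Oancea} itself. There the sequence is obtained non-equivariantly: one computes $SH^+(M)$ with a Morse--Bott Hamiltonian so that each good Reeb orbit $\gamma$ contributes two generators $\check\gamma,\hat\gamma$, shows the Floer differential is block upper-triangular with the linearized contact differential appearing in the diagonal blocks, and extracts the long exact sequence from the resulting two-step filtration (mapping cone), the connecting map being the off-diagonal block; identifying that block with the count of anchored cylinders subject to the marker conditions (the moduli spaces $\M_1$ and $\M_2$ used later in this paper) is then a separate geometric step. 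Your version buys conceptual clarity --- $D$ is manifestly ``cap with the Euler class'' --- at the cost of first constructing $SH^{+,S^1}$ and proving $HC(\partial M)\cong SH^{+,S^1}(M)$, which is itself a theorem of comparable difficulty, and which the present paper explicitly declines to use (see the remark following Theorem~\ref{main}). Be aware that everything you defer to your final paragraph --- Morse--Bott transversality and gluing, coherent orientations, the treatment of bad orbits, and above all the identification of the algebraic connecting map with the specific curve count that feeds into the descendant recursion --- is precisely where the substance lies; as written, your argument is a program rather than a proof.
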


\begin{rmk} \emph{Since we do not assume $c_1(M)$ to vanish, it is necessary to define $HC(\partial M)$ 
over the Novikov ring $\Lambda_{\omega}$ with $\Q$-coefficients, consisting
of formal linear combinations 
$$\lambda:=\sum_{A\in H_2(M;\Z)} \lambda_A e^A,\ \ \lambda_A\in\Q$$ 
such that 
$$ \#\{ A | \lambda_A\neq 0, \omega(A) \le c\} <\infty, \ \ \forall c>0 $$ 
Multiplication in $\Lambda_{\omega}$ is given by formal 
power series multiplication.}
\end{rmk}

In this paper we will focus on the genus-$0$ one-point invariants of 
$(M,\partial M)$, from now on $g=0$ and $n=1$ will be omitted from subscripts.
As we will see in section 2, each gravitational descendant can be interpreted 
as a homomorphism
\begin{equation}
\langle\tau_{a}(\theta)\rangle \colon HC(\partial M) \longrightarrow \Lambda_{\omega}
\end{equation}

The main result of this paper is the following identity:
\begin{thm}\label{main}
Let $M^{2n}, n\geq 3$ be a subcritical Stein domain, $c \in HC(\partial M)$,
$\theta\in H^*(M,\partial M)$, and $D$ the degree $-2$ map of 
Bourgeois--Oancea, then
\begin{equation}\label{maineq}
\langle\tau_{l}(\theta)\rangle(c) =\frac{1}{l} \langle\tau_{l-1}(\theta)\rangle(Dc)
\end{equation}
\end{thm}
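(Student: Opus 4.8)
The plan is to realize both sides of \eqref{maineq} as integrals of a tautological $\psi$-class over the SFT moduli space attached to the orbit $c$, and to trade one power of $\psi_1$ for an application of $D$ by means of an explicit section of the tautological line bundle built from the asymptotic geometry at the positive puncture. Concretely, as explained in section~2, $\langle\tau_l(\theta)\rangle(c)$ is computed by a moduli space $\M(c)$ of genus-$0$ holomorphic curves $u\colon \dot S\to \hat M$ in the symplectic completion, with one positive puncture asymptotic to a Reeb orbit representing $c$ and one interior marked point $z_1$, weighted in $\Lambda_\omega$ by $e^A$ according to homology class, with integrand $\ev_1^*(\theta)\cup\psi_1^{\,l}$, where $\psi_1=c_1(L_1)$ and $L_1=T^*_{z_1}\dot S$. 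First I would construct a canonical section $s$ of $L_1$: near the positive puncture the curve is asymptotically cylindrical, so the asymptotic marker fixes a preferred coordinate $w$ vanishing at the puncture and hence a preferred meromorphic $1$-form $\eta=dw/w$ on each domain, with a single simple pole at the puncture. Since $z_1$ is distinct from the puncture, setting $s(u):=\eta|_{z_1}\in (L_1)_u$ defines a section holomorphic over the interior of $\M(c)$.

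Next I would compute the divisor of $s$ on the compactification. Away from the boundary the section is nonvanishing, and it degenerates exactly along the codimension-$1$ stratum where $z_1$ escapes into the cylindrical end, forcing the domain to split off an upper level in $\R\times\bdry M$. This represents $\psi_1=c_1(L_1)$ by the one-level breaking divisor. The crucial step is then to recognize the broken configuration: the level that splits off realizes the Bourgeois--Oancea map $D$ applied to $c$, producing an intermediate orbit class $c'$, while the main curve — asymptotic to $c'$ and carrying $z_1$ with the constraint $\theta$ and the remaining class $\psi_1^{\,l-1}$ — realizes $\langle\tau_{l-1}(\theta)\rangle$. Summing over the intermediate orbits and matching Novikov weights then yields $\langle\tau_{l-1}(\theta)\rangle(Dc)$. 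This is consistent on dimensional grounds: reducing $\psi_1^{\,l}$ to $\psi_1^{\,l-1}$ lowers the relevant cycle dimension by $2$, matching $\deg D=-2$.

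It remains to pin down the factor $\tfrac{1}{l}$, which I expect from the local model of the degeneration. On the level carrying the marked point the asymptotic $S^1$ of rotations acts, and integrating $\psi_1^{\,l}$ against the order of vanishing of $s$ across this $S^1$-family reduces to a one-dimensional computation of the form $\int z^{\,l-1}\,dz$, producing $\tfrac{1}{l}$ while dropping the power of $\psi_1$ by one. Carrying this out rigorously is the main obstacle: one must (i) establish a gluing and compactness statement showing $Z(s)$ has no other codimension-$1$ contributions, for which I would invoke subcriticality of $M$ — since $SH(M)=0$ and a subcritical Stein domain retracts onto an isotropic skeleton of dimension $<n$, there are no closed holomorphic spheres and the compactification of $\M(c)$ is controlled — and (ii) match coherent orientations and the multiplicity $\tfrac{1}{l}$ with the data defining $D$. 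Verifying that $s$ descends to the virtual/perturbed moduli space and that the $e^A$-weights agree term by term on both sides completes the argument.
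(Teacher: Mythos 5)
Your overall strategy --- representing $\psi_1$ by the divisor of a section of the tautological line bundle and reading off the right-hand side from degenerations --- is the correct general framework, but the specific section you choose and the way you identify its zero locus with $D$ both break down. First, the form $\eta=dw/w$ lives only in the asymptotic cylindrical coordinate near the puncture, so $\eta|_{z_1}$ is not a well-defined section over the interior of the moduli space; and even granting an extension, the stratum you identify as its divisor --- where $z_1$ escapes into the cylindrical end and an upper level splits off --- is precisely the stratum annihilated by $\ev_1^*(\theta)$ for compactly supported $\theta\in H^*(M,\partial M)$, since the evaluation map sends it to $\partial M$. Taken literally, your computation would give $\langle\tau_{l}(\theta)\rangle(c)=0$. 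Second, and more fundamentally, the claim that ``the level that splits off realizes $D$'' is not correct: a generic codimension-$1$ breaking with an index-$1$ symplectization level computes the linearized contact homology \emph{differential}, which has degree $-1$ and whose contributions cancel when $c$ is a cycle. The map $D$ has degree $-2$ and is defined through the $S^1$-valued evaluation maps on anchored cylinders --- the condition $\ev^+(u)=0$ defining $\M_1$, plus the cyclic-order correction $\M_2$ on broken cylinders. Nothing in your proposal engages with this structure, so nothing in it distinguishes $D$ from the differential.

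The paper's proof uses a genuinely different section --- the derivative at the marked point of the projection of $u$ onto a chosen complex line, i.e.\ the ramification condition of Section~2.3 --- trivializes $L$ over the two-dimensional moduli space $\M^{-A-A_i}(\gamma_i;\alpha)$, and computes the signed count of zeros as the \emph{winding number} of this section along the boundary circles. The boundary arcs are sorted into three types according to how the index distributes over the two stories; type (1) arcs cancel because $c$ is a cycle, type (2) arcs contribute nothing, and the winding on type (3) arcs is identified with the winding of $\ev^-$ on the one-parameter families of anchored cylinders, whose integer part is the $\M_1$ count and whose fractional corrections are exactly the $\M_2$ count. This winding-number mechanism is the missing idea in your proposal: $D$ arises not as a boundary stratum of the compactification but from the monodromy of the gluing/rotation parameter along the boundary. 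The factor $\frac{1}{l}$ likewise comes from $c_1(L)=\frac{1}{l}c_1(L^{\otimes l})$ in the inductive definition of $\psi^l$ by coherent sections, not from a local residue computation.
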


\begin{rmk}\emph{This fits into the works of Bourgeois and Oancea as follows. 
Linearized contact homology is isomorphic to the $S^1$-equivariant symplectic homology of $M$, $SH_k^{+,S^1}(M)$, 
defined in \cite{S1symplectic}. On the symplectic homology side, the exact sequence \eqref{eq:intro}
becomes very much like the Gysin sequence for ordinary and $S^1$-equivariant homology for 
a space $X$ with an $S^1$-action, where the degree $-2$ map is given in spirit by ``capping with an
Euler class'':
\begin{equation} \label{eq:SGysin} 
 \ldots \to SH_k^+(M) \to SH_k^{+,S^1}(M) \stackrel {\cap e} \to 
SH_{k-2}^{+,S^1}(M) \to SH_{k-1}^+(M) \to \ldots 
\end{equation} 
We also have the tautological long exact sequence 
\begin{equation}  
 \ldots \to SH_k^{-,S^1}(M) \to SH_k^{S^1}(M) \to SH_{k}^{+,S^1}(M) \stackrel{\partial} \to SH_{k-1}^{-,S^1}(M) \to \ldots 
\end{equation} 
Since the $S^1$-action on $SH^-$ is trivial,
$SH_*^{-,S^1}(M) = H(M)_*[t]$ is the Morse homology of $M$ tensored with the homology of $\C P^{\infty}$, where $t$ is the 
degree $2$ generator of $H_*(\C P^{\infty})$.
The two long exact sequences form a commutative diagram in which one square has the form
$$
\begin{CD}
SH^{+,S^1}(M) @>\ \ \ \cap e \ \ \ >> SH^{+,S^1}(M)\\
@VV\partial V @VV\partial V\\
H(M)[t] @>\ \ \ \cap e \ \ \ >> H(M)[t]
\end{CD}
$$
where on $H(M)[t]$, capping with the Euler class is division by $t$. 
This is equivalent to Theorem \ref{main}, if on the linearized contact homology side,
the map $\partial$ is given by gravitational descendants. More precisely, 
if $c \in HC(\partial M)$ and $b \in H_*(M)$ is the Poincar\'{e} dual of $\theta\in H^*(M,\partial M)$, 
then the coefficient of $b\otimes t^a$ in $\partial(c)$ is $\langle\tau_{a}(\theta)\rangle(c)$. 
However in this paper we do not prove this correspondence and instead work purely on the side of
linearized contact homology.}
\end{rmk}

\begin{rmk}\emph{Equation \eqref{maineq} was first observed in \cite{He} for  
subcritical Stein manifolds with vanishing first Chern class. The proof 
however relied on explicit knowledge of the structure of the moduli spaces. We
remove the vanishing Chern class condition in this paper. Furthermore, the
proof is sufficiently general so that the subcritical assumption is only needed
to avoid the technical issue of \emph{bad} orbits. Theorem \ref{main} is expected
to hold for a much wider class of exact symplectic fillings. We keep the subcritical
assumption for a concise exposition of the main argument.}
\end{rmk}

\begin{rmk}\emph{It is well known that for $M$ closed, 
the genus-$0$ gravitational descendants satisfy relations 
known as the string, dilaton, and divisor equations. 
A further relation, known as topological recursion, together with those 
three equations, reduces descendant invariants to the Gromov--Witten 
invariants \cite{kont}. Theorem \ref{main} also allows the reconstruction of 
descendants from correlators. However Equation \eqref{maineq} is a 
not a consequence of the 
generalizations of the known relations in the closed case. 
When $M$ is closed, to compute a one-point 
invariant with a high power of $\psi$, it is necessary to 
introduce extra marked points. Hence $\langle\tau_{a}(\theta)\rangle$ depends
on the multi-point Gromov--Witten invariants. On the other hand, 
Theorem \ref{main} allows the one-point invariants to be computed without introduction of extra marked points.}
\end{rmk} 

\begin{rmk}\emph{Throughout this paper we assume the polyfold theory of Hofer, Zehnder and Wysocki, 
\cite{polyfolds}, \cite{polyfolds2}, \cite{polyfolds3}, which 
forms the analytical foundation of SFT. Therefore we will treat all moduli spaces
as being transversely cut out, knowing an abstract perturbation scheme exists
under which all moduli spaces become branched manifolds with boundaries and corners of the 
expected dimension. Any argument used 
on the moduli spaces can be applied without change to the
perturbed moduli spaces.}
\end{rmk}

This paper is organized as follows:
in section $2$ we define the relevant objects, and then in section $3$ we prove Theorem \ref{main}.

\section{Preliminaries}

\subsection{Linearized Contact Homology}

We will give a quick sketch of linearized contact homology following 
\cite{surgeryformula}. 
Novikov ring coefficient were not used in \cite{surgeryformula} but the
necessary modification is well known and can be found in \cite{EliashbergGiventalHofer}.  
Details can also be found in section $3$ of \cite{Oancea}.

Let $(V^{2n-1},\xi)$ be a contact manifold with a contact $1$-form $\alpha$, i.e.,
$(d\alpha)^{n-1}\wedge \alpha$ is a volume form and $\xi = \mbox{Ker}(\alpha)$. The 
\emph{Reeb vector field} is the unique vector field $R$ such that
$$ d\alpha (R,-) = 0,\ \ \ \ \ \alpha(R)=1.$$
The flow of the Reeb vector field preserves the contact structure $\xi$. A (possibly multiply covered) 
Reeb orbit $\gamma$ is \emph{non-degenerate} if the linearized Poincar\'{e} return map of the Reeb flow 
has no eigenvalue equal to $1$. For a generic choice of $\alpha$, there are countably many closed Reeb
orbits, all of which are non-degenerate. Let $\kappa_{\gamma}$ denote the multiplicity of the orbit $\gamma$.

\begin{defn}
\emph{
A Reeb orbit is \emph{good} if it is not an even multiple of another orbit $\gamma$ such that the linearized Poincar\'{e}
return map along $\gamma$ has an odd total number of eigenvalues (counted with multiplicity) in the interval $(-1,0)$.}
\end{defn}

The \emph{symplectization} of a contact manifold $(V^{2n-1},\xi, \alpha)$ is 
the manifold $V\times \mathbb{R}$ with the 
symplectic form $d(e^t\alpha)$, where $t$ is the coordinate of $\mathbb{R}$. 

An almost complex structure $J$ on a symplectization 
$(V\times \mathbb{R}, d(e^t\alpha))$ is \emph{compatible} if 
\begin{itemize}
\item $J^2 = -\mbox{Id}$, 
\item $d\alpha(v, Jv)>0$ for all non-zero $v\in \xi$,
\item $J$ is invariant under translation in the $\mathbb{R}$-direction, 
\item $J\xi = \xi$, and $J\del{t} = R$.
\end{itemize}

A \emph{symplectic filling} $(M,\omega)$ of a contact manifold $(V,\xi,\alpha)$ is an open symplectic manifold 
with one open cylindrical end of the form $E = V\times [0,\infty)$.
On the cylindrical end, $\omega|_{E} = d(e^t\alpha)$. The complement of $E$ is compact. A 
filling is \emph{exact} if $\omega$ is exact. 

An almost complex structure $J$ on a filling $(M,\omega)$ is \emph{compatible} if
\begin{itemize} 
\item $J^2=-\mbox{Id}$, 
\item $\omega(v,Jv)>0$ for all non-zero $v\in TW$,
\item on the cylindrical end $E$, $J$ is invariant under translation in the $\mathbb{R}$-direction, 
\item on $V=V\times\{0\}$, $J\xi = \xi$, and $J\del{t} = R$.
\end{itemize}

We are only interested in the Reeb orbits on $\partial M$ which are contractible 
in $M$. For each such orbit $\gamma$, choose a capping disk $D_\gamma$ in $M$. There is a homotopically
unique symplectic trivialization of the contact distribution $\xi$ on $\gamma$, which, together
with the trivial symplectic subbundle spanned by the Reeb field $R$ and Liouville 
field $Y$, extends to a symplectic trivialization of $TM$ on $D_\gamma$.
With respect to this trivialization, the linearized Reeb flow along $\gamma$ defines a path of
symplectic matrices, which has a Conley--Zehnder index $\mu({\gamma})$.

A finite energy holomorphic curve in the symplectization $\partial M\times \R$ is a holomorphic
map $u$ from a punctured Riemann surface $(S,i)$ with some positive punctures 
$\{p_i^+\}$ and negative punctures $\{q_j^-\}$. 
Near each positive puncture, the map $u$ is asymptotic to a trivial cylinder $\gamma^+_i\times \R$
at the positive end of $\partial M\times \R$ for some Reeb orbit $\gamma^+_i$. Similarly negative punctures
are asymptotic to Reeb orbits at negative infinity. We then take equivalence 
classes of such holomorphic maps where $(u, (S,i))\cong (u', (S',j))$ if there is a diffeomorphism
$\phi\colon S\to S'$ such that $u=u'\circ \phi$ and $i=\phi^* j$.

Each holomorphic curve carries a 
homology class $A\in H_2(M)$, obtained by add to $u$ the capping disks 
$\{-D_{\gamma_i}\}$ for the positive punctures
and $\{D_{\gamma_j}\}$ for the negative punctures. 
Homology class for a holomorphic curve in the filling $M$ is defined identically, 
where only positive punctures exist.

\begin{rmk}\emph{For readability we omit some details
such as asymptotic markers and the combinatorial coefficients they bring, only
commenting on them when necessary. We will pretend our orbits are simple
unless otherwise stated.}
\end{rmk}

Let $\M^{A}(\Gamma^+,\Gamma^-)$ denote the moduli space of genus-$0$ holomorphic curves in $\partial M\times \R$,
in the homology class $A$, asymptotic to the set of orbits $\Gamma^+=\{\gamma^+_i\}_{i=1}^k$ at positive infinity, 
and $\Gamma^-=\{\gamma^-_i\}_{i=1}^l$ at negative
infinity, where $\Gamma^-$ is allow to be the empty set. 
The expected dimension of $\M^{A}(\Gamma^+,\Gamma^-)$ is given by the index formula
$$\dim \M^{A}(\Gamma^+,\Gamma^-) = \sum_{i=1}^k\mu(\gamma^+_i) - \sum_{i=1}^l\mu(\gamma^-_i) + 2\langle c_1(TM),A\rangle + (n-3)(2-k-l)$$
where $n$ is the complex dimension of $M$.

The index formula holds for holomorphic curves in the filling $M$ as well, 
where $\Gamma^-$ is always empty. Note that
holomorphic curves in $\partial M\times \R$ always come in one-parameter families since the almost complex structure
$J$ is invariant under translation in the $\R$ direction. Therefore we will mod out by this $\R$-action, when we refer to 
a rigid curves in the symplectization, the index is $1$.

The chain group for linearized contact homology of $\partial M$ is freely
generated by the good Reeb orbits over the the Novikov coefficient ring $\Lambda_{\omega}$. 
Define a grading on $\Lambda_{\omega}$ by $$|e^A|:= -2\langle c_1(TM), A\rangle.$$ 
And a grading on the orbits by 
$$|\gamma| = \mu(\gamma)+(n-3).$$

The differential is given by the count of rigid holomorphic cylinders in the symplectization
$\partial M \times \R$ \emph{anchored in $M$}. Let $\gamma^+$ and $\gamma^-$ be two Reeb orbits.
An element $u$ in the moduli space $\M^{A}_c(\gamma^+,\gamma^-)$ is a pair of holomorphic curves 
$u=(u_1,u_2)$ such that
\begin{itemize}
\item $u_1$ is a genus-$0$ holomorphic curve in the symplectization $\partial M\times \R$ with
one positive puncture asymptotic to $\gamma^+$, one distinguished negative puncture asymptotic to
$\gamma^-$, and any number of other negative punctures asymptotic to orbits $\gamma_1,\dots,\gamma_k$.
$u_1$ has homology class $B$.
\item $u_2$ is a collection of holomorphic planes $u_2^1,\dots u_2^k$ in the filling $M$ 
asymptotic to the Reeb orbits $\gamma_1,\dots,\gamma_k$. The total homology class is $C$.
\item $A=B+C$.
\end{itemize}

The expected dimension of $\M^{A}_c(\gamma^+,\gamma^-)$ is $\mu(\gamma^+)-\mu(\gamma^-)+2\langle c_1(TM),A\rangle$.
We are interested in rigid pairs, hence $u_1$ has index $1$ and $u_2$ is a collection of 
rigid planes. 

The linearized contact homology differential is given by
$$\partial \gamma = \sum_{(\gamma',A)\colon \mu(\gamma')-2\langle c_1(TM),A\rangle =\mu(\gamma)-1} n_{\gamma,\gamma'}e^{A}\gamma'$$
where $n_{\gamma,\gamma'}$ is the algebraic count of $\M^{A}_c(\gamma,\gamma')/\R$.

\begin{rmk}\emph{Asymptotic markers will introduce a factor of $\frac{1}{\kappa_{\gamma'}}$
into $n_{\gamma,\gamma'}$. This factor comes from the fact that when asymptotic makers
are include, there are $\kappa_{\gamma'}^{2}$ ways to glue along $\gamma'$ but only result in
$\kappa_{\gamma'}$ distinct glued curves. Please see \cite{EliashbergGiventalHofer} for
more detail.}
\end{rmk}

\subsection{Gravitational Descendants}
If we generalize the definition for Gromov--Witten invariants and their gravitational
descendants directly to the SFT setting, we immediately run into the 
problem that the moduli
spaces in SFT have codimension-$1$ boundary, therefore even though the tautological
line bundle $L$ still exists, its Chern class is not well defined. Instead the $\psi$ class should be viewed as the zero set of a generic section $s$ of $L$. The main idea is that these sections should not be chosen completely independently across different moduli spaces, but should satisfy a compatibility condition on 
the boundary. This is carried out in full detail in \cite{Fabert}. 

We will keep to simple case of one marked point. Let $\M^{A}(\gamma)$ be the moduli space
of holomorphic planes in $M$ asymptotic to the Reeb orbit $\gamma$ in homology class
$A$. Then
$\psi$ is the zero sets of a collection of generic coherent sections 
$\{s(\M^{A}(\gamma))\}$ over all orbits and homology classes, 
which is compatible with the restriction maps to boundary strata. For example, if a boundary stratum of $\M^{A}(\gamma)$ is of the form 
$\M^{A-B}(\gamma,\gamma')\times \M^{B}(\gamma')$, 
then $s(\M^{A}(\gamma))$ restricted to that stratum
is the pull back of $s(\M^{B}(\gamma'))$ under the projection map $\M^{A-B}(\gamma,\gamma')\times \M^{B}(\gamma') \to \M^{B}(\gamma')$. 
In general if there are more than one marked points, there 
will be further compatibility conditions for the symmetry of relabeling marked points.

Higher powers of $\psi$ are inductively defined. More precisely, $\psi^l$ is the zero
sets of a collection of coherent sections of $L^{\otimes l}$ over the zeros sets representing $\psi^{l-1}$, with a factor of $\frac{1}{l}$ 
since $c_1(L)=\frac{1}{l}c_1(L^{\otimes l})$.

For a compactly supported form $\theta \in H^*(M,\partial M)$,
the gravitational descendant $$\int_{\M^{A}(\gamma)}\ev^{*}(\theta)\cup \psi^l$$ is
defined to be the integral of $\ev^{*}(\theta)$ over the subset representing $\psi^l$.
Note that this subset is of codimension-$2l$ in $\M(\gamma)$.

The value of such a descendant will not be an invariant. However certain linear
combinations of descendants will be, if we make sure their codimension-$1$ boundary
strata cancel out.

\begin{prop}\label{tech}
Let $(M, \partial M)$ be an exact symplectic filling. 
If $a = [\sum_{i=1}^{k} c_ie^{A_i}\gamma_{i}]\in HC(\partial M)$ 
is a cycle in linearized contact homology, $A\in H_2(M)$ a homology class,
and $\theta$ a compactly supported closed form on $M$, 
then the value of the linear 
combination of descendants 
$$\langle\tau_l(\theta)\rangle^{A}(a)\colon =
\sum_{i=1}^{k}c_{i}\int_{\mathcal{M}^{-A-A_i}(\gamma_i)}\ev^*(\theta)\cup \psi^{l}$$
is independent of all choices. 
\end{prop}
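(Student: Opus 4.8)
The plan is to prove invariance by a cobordism-and-Stokes argument: the number $\langle\tau_l(\theta)\rangle^A(a)$ depends on the auxiliary data only through the codimension-$1$ boundary of the moduli spaces $\mathcal{M}^{-A-A_i}(\gamma_i)$, and I will show that the cycle condition $\partial a = 0$ forces every such boundary contribution to cancel. Throughout, exactness of the filling is used to guarantee SFT compactness and, crucially, to exclude closed $J$-holomorphic sphere bubbles: a nonconstant sphere would have positive $\omega$-area, which is impossible when $\omega=d\lambda$. Thus the only codimension-$1$ degenerations are the SFT breakings into a symplectization level and a filling level, and these are exactly the configurations governed by the differential $\partial$.

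First I would fix two systems of choices, almost complex structures and coherent sections $(J^0,s^0)$ and $(J^1,s^1)$, and connect them by a generic path $(J^t,s^t)$, $t\in[0,1]$, where $s^t$ is a coherent system of sections of $L^{\otimes l}$ over the parametrized moduli spaces. For each $i$ this produces a parametrized moduli space $\widehat{\mathcal{M}}_i=\bigcup_{t\in[0,1]}\mathcal{M}^{-A-A_i}_{J^t}(\gamma_i)\times\{t\}$, one dimension higher than $\mathcal{M}^{-A-A_i}(\gamma_i)$, and inside it the parametrized $\psi^l$-locus $W_i$ cut out by $s^t$. Since $\theta$ is closed, $\ev^*\theta$ is closed; its compact support confines the integral to the compact part of $M$ so that no spurious boundary term arises from the cylindrical end, and Stokes' theorem gives $\int_{\partial W_i}\ev^*\theta=\int_{W_i}d\,\ev^*\theta=0$. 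Weighting by $c_i$ and summing, the identity $\sum_i c_i\int_{\partial W_i}\ev^*\theta=0$ is the engine of the proof.

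Next I would enumerate $\partial W_i$. The faces at $t=0$ and $t=1$ reproduce $\langle\tau_l(\theta)\rangle^A(a)$ computed with $(J^0,s^0)$ and with $(J^1,s^1)$, and their difference is precisely the quantity to be shown to vanish. The genuine SFT breaking faces have the form $\big(\mathcal{M}^{B}_c(\gamma_i,\gamma')/\R\big)\times\mathcal{M}^{-A-A_i-B}(\gamma')$, a rigid anchored cylinder contributing the differential coefficient $n_{\gamma_i,\gamma'}e^{B}$ (including the $1/\kappa_{\gamma'}$ asymptotic-marker factor) glued to a lower plane that carries the marked point. Coherence of the section system is designed precisely so that on such a face the $\psi^l$-locus restricts to the pullback of the $\psi^l$-locus of the $\mathcal{M}(\gamma')$ factor; hence the integral of $\ev^*\theta$ over the face factors as $n_{\gamma_i,\gamma'}e^{B}\cdot\int_{\mathcal{M}^{-A-A_i-B}(\gamma')}\ev^*\theta\cup\psi^l$. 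Summing over $i$, $\gamma'$, and $B$ with weights $c_i$ assembles these contributions into $\langle\tau_l(\theta)\rangle^A(\partial a)$, which vanishes because $a$ is a cycle. Any remaining face, such as the marked point colliding with the puncture or an interior nodal degeneration, is either excluded by exactness or again governed by coherence and falls into the same $\partial a$ pattern.

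The hard part will be the bookkeeping of the breaking faces: verifying that the coherent sections make the $\psi^l$-locus split as a genuine product on each boundary stratum, so that the integral factors, and that the signs together with the multiplicity and asymptotic-marker factors $1/\kappa_{\gamma'}$ match exactly those appearing in $\partial$, so that the summed boundary integral is literally $\langle\tau_l(\theta)\rangle^A(\partial a)$ rather than merely proportional to it. Granting the polyfold transversality package, invariance under changing capping disks and trivializations is a separate and easier check: altering the capping disk of $\gamma_i$ shifts the recorded homology class $A_i$ by a fixed element and is absorbed uniformly into the weight $e^{A_i}$, leaving the pairing $e^{A_i}\cdot\mathcal{M}^{-A-A_i}(\gamma_i)$ unchanged. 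This reduces the full invariance statement to the single homological input $\partial a=0$.
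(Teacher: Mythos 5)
Your proposal is correct, and its engine is the same as the paper's: the only codimension-$1$ breaking strata that can contribute are an index-$1$ anchored cylinder sitting over a plane that carries the marked point, i.e.\ exactly the configurations counted by the linearized contact homology differential, and these cancel because $\partial a=0$. The packaging, however, is genuinely different. The paper does not interpolate between two systems of choices at all: it shows in one stroke that $\sum_i c_i\,\ev\bigl(\M^{-A-A_i}(\gamma_i)\bigr)$ (and, for $l>0$, the evaluation of the coherent $\psi^l$-loci) is a relative homology cycle in $H_*(M,\partial M)$, after which the descendant is the topological intersection pairing with the Poincar\'e dual of $\theta$ and all invariance statements follow simultaneously; the strata with the marked point on the symplectization level are discarded because they evaluate into $\partial M$, and strata with more than one non-rigid component are discarded because their evaluation image has codimension at least $2$. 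Your route instead fixes $(J^0,s^0)$ and $(J^1,s^1)$, builds the parametrized moduli space over $[0,1]$, and runs Stokes on the $\psi^l$-locus $W_i$; this is the standard chain-homotopy argument and is perfectly valid, but it obliges you to redo the boundary analysis for the one-dimension-higher parametrized space and to treat each kind of auxiliary choice separately, whereas the paper's ``evaluation image is a cycle'' formulation buys independence of the representative of $\mathrm{PD}(\theta)$, of $J$, and of the coherent sections all at once. One point you should make explicit in your framework: the face of $\partial W_i$ on which the marked point escapes to the symplectization level is genuinely codimension $1$ and is killed neither by exactness nor by coherence, but by the fact that its evaluation image lies in the cylindrical end where the compactly supported form $\theta$ vanishes --- this is the Stokes-language counterpart of the paper's observation that such strata map to $\partial M$ and hence do not affect $H_*(M,\partial M)$.
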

\begin{proof}
It is enough to prove the case $l=0$. The higher $\psi$ classes will follow in
identical fashion, replacing each moduli space $\M^A(\gamma)$ by the codimension $2l$ 
zero sections representing $\psi^l$ in $\M^A(\gamma)$. By the compatibility condition
for coherent sections, the boundary strata of these zero sections have the same 
structures as their parent moduli spaces.

We will show that under the evaluation map, 
$\sum c_i \ \ev (\M^{-A-A_i}(\gamma_i))$ is a relative homology cycle in 
$H_*(M,\partial M)$, which we can then intersect with the Poincar\'{e} dual of
$\theta$. This intersection number is the same as the descendant integral, and 
is independent of choices.

By the compactness theorem of \cite{compactness}, a codimension-$1$ stratum of $\mathcal{M}^{-A-A_i}(\gamma_i)$ consists of $2$-story curves $(u_1, u_2)$, 
such that $u_1\in \mathcal{M}^{-A-A_i-B}(\{\gamma\}; \{\beta_1,\dots, \beta_l\})$ 
is a genus-$0$ holomorphic curve in the symplectization 
$\partial M\times \mathbb{R}$ with several negative punctures; 
and $u_2$ consists of $l$ holomorphic planes in the filling $M$, 
one asymptotic to each $\beta_i$ and in homology class $B_i$, 
with total homology class $B$. 
The marked point can be located on $u_1$ or on any one of the 
holomorphic planes of $u_2$. 

Consider the image of such a stratum under the evaluation map. 
If the marked point is on $u_1$, 
then the stratum is mapped to infinity 
(in other words to $\partial M$), which does not affect $H_*(M,\partial M)$.
Suppose the marked point lies on the
plane asymptotic to $\beta_1$. If the other planes in $u_2$ are not rigid, then
the dimension of $\M^{B_{1}}(\beta_1)$ is at least $2$ less than that of $\mathcal{M}^{-A-A_i}(\gamma_i)$.
Therefore this stratum is mapped under the evaluation map to a 
chain of  codimension at least $2$, so has no effect on homology.

The only codimension-$1$ strata must have $u_1$
an index-$1$ holomorphic curve between $\gamma$ and $\gamma'$, and $u_2$ contains
all but one rigid holomorphic planes. Evaluation map takes this stratum to 
$\ev(\M^{B_1}(\beta_1))$. This is exactly the configuration appearing
in the linearized contact homology differential. Since $a$ is a cycle, there
is another boundary stratum in some $\mathcal{M}^{-A-A_j}(\gamma_j)$ which 
evaluates to $-\ev(\M^{B_1}(\beta_1))$. Therefore 
$\sum c_i \ \ev (\M^{-A-A_i}(\gamma_i))$ is a relative homology cycle.
\end{proof}

\begin{rmk}\emph{The unfortunate negative sign for the homology 
class $A$ is due to the fact that 
$|e^A\gamma| = \mu(\gamma)-2\langle c_1(TM), A\rangle+(n-3)$, 
but the moduli space $\M^{A}(\gamma)$ has dimension 
$\mu(\gamma)+ 2\langle c_1(TM), A\rangle +(n-3)$.}
\end{rmk}

\begin{defn}\emph{Define
$\langle\tau_{l}(\theta)\rangle \colon HC(\partial M) \rightarrow \Lambda_{\omega}$ 
by 
$$\langle\tau_{l}(\theta)\rangle (a) = 
\sum_{A\in H_2(M)} \langle\tau_{l}(\theta)\rangle^A(a)\cdot e^A$$ }
\end{defn}

\subsection{Stein Domains}
An open complex manifold $(M^{2n},J)$ is \emph{Stein} if
it can be realized as a properly embedded complex submanifold
of some $\mathbb{C}^{N}$. 
A smooth function $f\colon M\rightarrow \mathbb{R}$ is \emph{exhausting}
if it is proper and bounded from below. Let $d^{J}f$ denote
$df\circ J$. The function $f$ is
\emph{plurisubharmonic} if the associated 2-form $\omega_{f} = -dd^{J}f$
is a symplectic form taming $J$, i.e., $\omega_{f}(v,Jv)>0$ for every
non-zero tangent vector $v$. Plurisubharmonicity is an open condition.
We can therefore assume $f$ to be Morse. By a theorem of Grauert, an
open complex manifold is Stein if and only if it admits a plurisubharmonic
function.

A Stein manifold $(M^{2n},J)$ with an exhausting plurisubharmonic
function $f$ admits the following associated structures:

\begin{itemize}
\item a symplectic form $\omega_{f} = -dd^{J}f$ which is $J$-invariant,
\item a primitive $\alpha = -d^{J}f$,
\item a vector field $Y$ such that $\alpha=\iota_{Y}\omega$,
\item a metric $g(v,w)=\omega(v,Jw)$.
\end{itemize}

Since $L_{Y}\omega = \iota_{Y}d\omega + d(\iota_{Y}\omega) = d\alpha =\omega$,
the vector field $Y$ is Liouville, i.e., the flow of $Y$ expands the symplectic
form. In fact $Y$ is the gradient vector field of $f$ with respect to the metric $g$.
The unstable submanifolds of the critical points are isotropic, therefore the
Morse index is at most $n$.

A \emph{Stein domain} $(M,\partial M)$ is a compact submanifold of a Stein manifold $W$ of the 
same dimension, such that $\partial M$ is transverse to the Liouville
vector field $Y$. A Stein domain becomes a symplectic filling if we complete
it by attaching the symplectization $\partial M\times [0,\infty)$ to the outside of
$(M,\partial M)$. Note that such a Stein filling is always exact. 
We will abuse notation and denote the completion by $(M,\partial M)$ also. 

\begin{defn}
\emph{A Stein domain $M^{2n}$ is \emph{subcritical} if there exists a plurisubharmonic Morse function with all critical points having Morse index strictly less than $n$;
and is of \emph{finite type} if the number of critical points is finite.}
\end{defn}

\begin{rmk}
\emph{For a subcritical Stein manifold of finite type, 
we may assume that all orbits are good by action filtration. More precisely,
we can construct a decreasing sequence of contact $1$-forms $\alpha_n$ with an increasing sequence of real 
numbers $b_n\to \infty$ such that all Reeb orbits for $\alpha_n$ with action less than $b_n$ are good. Then the linearized
contact homology is the limit of $HC^{\leq b_n}(\partial M, \alpha_n)$. From now on we will assume that all orbits are good.}
\end{rmk}

The core of a Stein domain $M$ with a plurisubharmonic function $f$, $C_M$, 
is the union of all the unstable submanifolds of $f$ in $M$. 
The integrable complex structure on $M$ is not compatible in the sense of a symplectic
filling, because it is generally not invariant under the flow of the Liouville field.
However perturbations can be chosen to be made only on the cylindrical end 
of the filling, keeping $J$ integrable near the core of the manifold. Since $C_M$
consists of cells of dimension at most $n$, but $M$ has dimension $2n$, standard 
homotopy theory shows there is no obstruction in choosing a trivial complex line
subbundle of $TM$ over $C_M$. Hence there is a consistent way to pick a
complex line $\C_p$ at each $p\in C_M$.

In this setting there is a geometric interpretation of the gravitational 
descendants. For each $\theta\in H^*(M,\partial M)$,
choose a homology cycle representing the Poincar\'{e} dual
of $\theta$, say $\alpha$, such that $\alpha$ lies in $C_M$. 
First we restrict to curves passing through $\alpha$. Let
$\M^{A}(\gamma;\alpha)$ be the moduli space of holomorphic planes with
one marked point, in the homology class $A$, asymptotic to $\gamma$,
and the marked point is mapped to $\alpha$. We can normalize so
that the marked point is $0$.

For $u\in \M^{A}(\gamma;\alpha)$, choose a small neighborhood of 
$u(0)$ where $J$ is integrable,
then in that neighborhood project $u$ onto the chosen complex line $\C_{u(0)}$. 
Denote this projection by $\rho_{u(0)}$.
Then $\rho_{u(0)}\circ u$ is a holomorphic map $\C\to \C$ and we can compute 
its vanishing order at $0$. If the first $l$ derivative vanish then we say $u$ has 
ramification index $(l+1)$. A coherent collection of sections for gravitational
descendants can be chosen to be the curves of ramification index $l$.

To be more precise, the domain of our holomorphic curves, 
$\C$ with one marked point, is not stable. The automorphism group is exactly $\C^*$. 
The addition of an extra marked 
point makes the domain stable. We will call the space of holomorphic planes with one marked 
point the \emph{unparametrized} curves, and the space of holomorphic planes with two marked points
the \emph{parametrized} curves.

The tautological line bundle $L$ over $\M^{A}(\gamma;\alpha)$ is the dual of the bundle of 
parametrized curves over unparametrized curves. The fiber over each element $u$ of
$\M^{A}(\gamma;\alpha)$ consists of a holomorphic map $\tilde{u}\colon \C \rightarrow M$, together with the
$\C^*$-family of reparametrizations of $\tilde{u}$, $\{\tilde{u}(cz),c\in \C^*\}$. We can fill in
the zero section by observing that $c=0$ corresponds precisely
to the nodal curve $\C\cup \C P^1\rightarrow M$, where a constant ghost bubble is attached to $u$.

The derivative at $0$ of the projection of $u$ to the chosen
complex direction $\mathbb{C}_{u(0)}$, 
$\tilde{u} \rightarrow \frac{\partial}{\partial z}(\rho_{u(0)}\circ \tilde{u})|_{z=0}$, 
is a section of the dual
of the tautological line bundle. Furthermore such sections form a coherent collection over different $\M^{A}(\gamma;\alpha)$'s. The
zero set of this section consists of (unparametrized) holomorphic curves $u$ whose representative $\tilde{u}$,
after projection onto the chose $\C$ direction, has the form $z\rightarrow cz^{k}$ for some $k\geq 2$.
We will often suppress the chosen complex direction, and simply refer to this zero set as curves with 
ramification index $2$.

Similarly, over the curves with ramification index $2$, 
$\tilde{u}\rightarrow \frac{\partial^2}{\partial z^2}(\rho_{u(0)}\circ \tilde{u})|_{z=0}$ is a section 
of $L^{\otimes 2}$. The zero set of this section is referred to as curves with ramification index $3$.

Therefore if $\theta$ is Poincar\'{e} dual to the point class, then 
$\int_{\mathcal{M}_{\gamma}}\ev^*(\theta)\wedge \psi^{l}$ can be interpreted as the count of holomorphic
planes passing through $p$ with ramification index $(l+1)$, divided by $l!$.

\begin{rmk}\emph{This part of the setup works without any subcritical assumption.}
\end{rmk}

\section{Proof of Main Theorem}

We first give the description of the map $D$ following section 7.2 of \cite{Oancea}. 
The map $D$ is described exclusively in terms of holomorphic 
curves in the symplectization $\partial M\times \R$ anchored in $M$. We
assume that there are no bad orbits. In the presence bad orbits this 
description will need to be modified to include their contribution.
Note that this is the only place where we use the subcritical assumption.

Identify the geometric image of each Reeb orbit $\gamma$ with the unit circle $S^1\cong \R/2\pi \Z$ in
$\C$, i.e. choose a point $P_\gamma$ which is identified with $0$, and the 
Reeb flow is counterclockwise on $S^1$. Denote this by $S^1_{\gamma}$.

Let $u$ be an element of $\M^A_c(\gamma^+,\gamma^-)$. Take
a \emph{parametrized} anchored holomorphic cylinder $(u_1,u_2)$ 
between $\gamma^+$ and $\gamma^-$ representing $u$. 
Choose a global polar coordinate $\C\setminus \{0\}$ for the domain of $u_1$, where the
positive puncture is $\infty$, the distinguished negative puncture is $0$, and the
other negative punctures $\{z_i\}_{i=1}^{k}$ are on $\C\setminus \{0\}$. 
Let $H$ be the positive real axis.

Since $u_1$ is a curve in symplectization, it
has the form
$$u_1 = (\overline{u}_1, f)\colon \C\setminus \{0, z_1,\dots, z_k\}\rightarrow \partial M\times \R.$$

Reparametrize the domain cylinder $\C\setminus \{0\}$ of $u_1$ by rotation, such that 
$$\lim_{z\rightarrow \infty, z\in H}\overline{u}_1(z) = P_\gamma^+.$$ 
Then define 
$$\ev^-(u) = \lim_{z\rightarrow 0, z\in H}\overline{u}_1(z) \in S^1_{\gamma^-}.$$

Similarly after making 
$\lim_{z\rightarrow 0, z\in H}\overline{u}_1(z) = P_\gamma^-$, we define 
$$\ev^+(u) = \lim_{z\rightarrow \infty, z\in H}\overline{u}_1(z) \in S^1_{\gamma^+}.$$

\begin{rmk}\label{asymp}\emph{
Since $S^1_{\gamma}$ is only the geometric image of $\gamma$, it appears that when $\gamma^+$ is
multiply covered, the reparametrization to make $\lim_{z\rightarrow \infty, z\in H}\overline{u}_1(z) = x$
is not unique, so $\ev^{\pm}(u)$ is not well defined. This is due to us ignoring the asymptotic markers.
The full definition of $\M_c^A(\gamma,\gamma')$ includes asymptotic markers at each puncture.
Equivalence of holomorphic maps has to also preserve the makers. In particular this means
each element in our moduli space without asymptotic markers actually corresponds to 
$\kappa_{\gamma^+}\kappa_{\gamma^-}$ elements in the moduli space with markers. Every possible way to
reparametrize is covered by exactly one such curve with markers. We will continue pretending the
orbits are simple.} 
\end{rmk}

The map $D$ is induced by the chain level map
\begin{equation}\label{eq:def1}
\Delta(\gamma)=\frac{1}{\kappa_{\gamma'}}\sum_{|\gamma|=|e^{A}\gamma|+2} c^A_{\gamma,\gamma'}e^A \gamma', 
\end{equation}
where $c^A_{\gamma,\gamma'}$ is the sum of counts of two types of moduli spaces:
\begin{enumerate}
\item the moduli space $\M_{1}$ of anchored holomorphic cylinders $u$ in $\M^{A}_c(\gamma, \gamma')$
such that $\ev^+(u) = 0$, or equivalently $\ev^-(u) = 0$.
\item the moduli space $\M_2$ of parametrized broken holomorphic cylinders 
$$u=(u_1,u_2)\in \M^B_c(\gamma, \beta), \ \ u'=(u'_1,u'_2)\in \M^{A-B}_c(\beta, \gamma')$$
such that on the intermediate breaking orbit $S^1_{\beta}$,
$\{0, \ev^-(u),  \ev^+(u')\}$ lie in anticlockwise order.
\end{enumerate}

The moduli space $\M_c^A(\gamma,\gamma')$ consists of one parameter family of holomorphic cylinders,
the boundary consists of a number of broken cylinders. Intuitively $\M_1$ counts the solutions of $\ev^-(u)=0$ 
in the interior of $\M_c^A(\gamma,\gamma')$, and $\M_2$ counts a subset of $\partial \M_c^A(\gamma,\gamma')$.

\begin{rmk}\emph{At first glance it seems $\M_2$ depends how we identify $\gamma$ with $S^1$, i.e., the
the choice of $P_\gamma$ to set as $0$. It is true that chain map $\Delta$ depends on such choices. 
However $D$ is invariantly define on homology. This can be easily seen as follows, suppose we 
let the $P_\gamma$'s move continuously. Then three different types of change can occur 
on some intermediate breaking orbit $\beta$:
\begin{itemize}
\item for some $u_0\in\M_c^{B}(\gamma,\beta)$, $\ev^-(u_0)$ moves pass $0$ in clockwise 
direction. Then for all other $u'\in\M_c^{C}(\beta,\gamma')$, where $\gamma$ and $C$ 
vary over all possibilities, the broken cylinders $\{u_0,u'\}$ go from counting in $\M_2$ to not counting. 
Nothing else is changed. Hence $\Delta(\gamma)$ is changed by a boundary term $\del \beta$, and therefore
has no effect on homology. 
\item for some $u_0\in\in\M_c^{B}(\gamma,\beta)$ and $u'_0\in\M_c^{A-B}(\beta,\gamma')$, $\ev^-(u_0)$
moves pass $\ev^+(u'_0)$ in clockwise direction. Then the only change is $\{u_0,u'_0\}$ goes from
not counting to counting. However, in the process, we crossed a cylinder where $\ev^-(u_0)=\ev^+(u'_0)$.
After gluing, the cylinder $\tilde{u} = u_0\# u'_0 \in \M_c^{A}(\gamma,\gamma')$ satisfies
$\ev^-(\tilde{u})=0$. Hence $\tilde{u}$ belongs to the moduli space $\M_{1}$. 
Therefore there is a corresponding change in $\M_{1}$ which 
cancels with this change in $\M_{2}$, so overall $\Delta(\gamma)$ remains unchanged.
\item for some $u'_0\in\M_c^{C}(\beta,\gamma')$, $0$ moves pass $\ev^+(u'_0)$ in
clockwise direction. Then similar to the first case, for every $u\in\M_c^{B}(\gamma,\beta)$, 
the broken cylinders $\{u,u'_0\}$ goes from counting to not counting. If 
$a=\sum_{i=1}^{k} c_i\gamma_i, c_i\in \Lambda_{\omega}$ is a cycle in linearized 
contact homology, then by definition, each cylinder $u_0\in \M_c^{B_i}(\gamma_i,\beta)$ 
will be cancel by some other cylinder $u_1\in \M_c^{B_j}(\gamma_j,\beta)$. It follows that 
$\Delta$, when evaluated on a cycle, remains unchanged.
\end{itemize}}
\end{rmk}

\begin{proof}[\emph{\textbf{Proof of Theorem \ref{main}}}]
As in the proof of Proposition \ref{tech}, it is enough to prove equation \eqref{maineq}
for the case $l = 1$. For higher values of $l$ we simply repeat the argument with
the coherent codimension-$2l$ zero sections of $L^{\otimes l}$ in place of the moduli spaces.
As in Section $2.3$, we can chose $l$-th derivative to be the coherent sections, and
the curves with ramification index-$l$ are the coherent codimension-$2l$ zero sections sections. 

Consider the gravitational descendant $\langle\tau_1\theta\rangle^{A}$. Let 
$a=\sum_{i=1}^{k} e^{A_i}\gamma_i$ be a cycle in linearized contact 
homology. Let $\alpha$ be a homology cycle representing the Poincar\'{e} dual of $\theta$. 
For each $\langle\tau_1\theta\rangle^{A}(e^{A_i}\gamma_i)$, 
we are interested in the number of holomorphic 
planes asymptotic to $\gamma_i$, 
in the homology class $-A-A_i$,  
and passing through $\alpha$ with ramification index $2$. 

Reduce the ramification index requirement by one, then the moduli space becomes
two dimensional. In this case we have $\dim \M^{-A-A_i}(\gamma_i; \alpha) = 2$.
Recall that $\M^{X}(\gamma; \alpha)$ is the moduli space of 
holomorphic planes with one marked point in homology class $X$, asymptotic to $\gamma_i$, 
and the marked point is mapped to $\alpha$.

First we trivialize the tautological bundle $L$ over $\M^{-A-A_i}(\gamma_i; \alpha)$.
As explained in Section $2.3$, this amounts to choosing a parametrized map 
$u\colon \C\rightarrow M=M'\times \C$ over each element of $\M^{-A-A_i}(\gamma_i; \alpha)$
(more precisely this gives a trivialization of the dual of $L$ and hence $L$). 

To fix the $S^1$-component of the automorphism group, 
similar to the previous discussion on anchored cylinders,
we may required that $\lim_{z\rightarrow \infty, z\in H}u(z) = P_{\gamma}$ (note that this 
does not uniquely determine $u$ if $\gamma$ is multiply covered, but Remark \ref{asymp} applies
here as well). To fix the $\R$-component of the automorphism group, we can simply require 
that $u(1)$ is always a distance of $\epsilon$ away from $u(0)$, for some sufficiently small 
constant $\epsilon$.

Under this trivialization of $L$, the coherent section is a map from 
$\M^{-A-A_i}(\gamma_i; \alpha)$ to $\C$:
$$f\colon u \longmapsto \frac{\partial}{\partial z}\left(\rho_{u(0)}\circ u\right)(0)$$

Then $\langle\tau_1\theta\rangle^{A}(e^{A_i}\gamma_i)$ is the number of zeroes of
$f$. Since $\M^{-A-A_i}(\gamma_i; \alpha)$ is $2$-dimensional, its boundary consists of a 
collection of circles, and the number of zeroes of $f$ is equal to the winding number
of $f({\partial \M^{-A-A_i}(\gamma_i; \alpha)})$ around $0$.

The boundary of $\M^{-A-A_i}(\gamma_i; \alpha)$ consists of $2$-story curves
$(u_1, u_2)$ where 
\begin{itemize}
\item $u_1\in \M^{B}(\{\gamma_i\},\{\beta_1,\dots \beta_k\})$ 
is a genus-$0$ holomorphic in the symplectization $\partial M\times \R$,
with one positive puncture asymptotic to $\gamma_i$, and several negative punctures
asymptotic to $\{\beta_j\}$.
\item $u_2$ is a collection of holomorphic planes $u_2^j\in \M^{C_j}(\beta_j)$ 
in $\M$ asymptotic to the negative punctures of $u_1$. One of the planes, let us always
use $u_2^1$, contains the marked point. Thus$u_2^1$ belongs to the cut down moduli space 
$\M^{C_1}(\beta_1;\alpha)$, other planes are unconstrained.
\item $B+C_1+\dots +C_k = -A-A_i$.
\end{itemize}

The total index of $(u_1,u_2)$ is $2$. There are a few different ways the
index can distribute on the two stories:
\begin{enumerate}
\item
$(u_1,u_2)$ has index-$(1,1)$: $u_1$ is rigid up to $\R$-translation.
And the plane $u_2^1$ comes in a $1$-parameter family, the others plane 
$u_2^j$'s are rigid. 
\item
$(u_1,u_2)$ has index-$(1,1)$: $u_1$ is rigid. However $u_2^1$ is also rigid, some plane
$u_2^j$ without marked point has index-$1$.
\item
$(u_1,u_2)$ has index-$(2,0)$: $u_1$ is a $1$-parameter family. And all the $u_2^i$'s
are rigid. 
\end{enumerate}

Therefore the boundary circles of $\M^{-A-A_i}(\gamma_i; \alpha)$ is divided into arcs and 
circles of these three types. Observe that the end points of these arcs,
i.e. to cross from one type to another, are exactly the $3$-story curves $(u_1, u_2, u_3)$,
such that all three are rigid curves: 
\begin{itemize}
\item
$u_1$ is a curve with one positive puncture and
several negative punctures in the symplectization $\partial M\times \R$; 
\item
$u_2$ is also in the symplectization, it has several connected components, 
one of which is a curve with one positive puncture and
several negative punctures, all other components are trivial cylinders;
\item
$u_3$ is a collection of rigid planes in the filling, with the marked point
on one of them.
\end{itemize}

If we glue $u_1$ and $u_2$ first and leave $u_3$ fixed, we get a curve of type (3). If 
we glue $u_2$ and $u_3$ and leave $u_1$ fixed, we get a curve of type (1) or (2).
Note that we cannot go from type (1) arcs directly to type ($2$) arcs.

Recall that the winding number for an arc $s\colon [0,1] \to S^1$ is equal to 
$\tilde{s}(1)-\tilde{s}(0)$ where $\tilde{s}$ is the lift of $s$ to the 
universal cover $\R$. Hence the value of 
$\langle\tau_1\theta\rangle^{A}(e^{A_i}\gamma_i)$ is the total winding number
of the derivative map $f$ on all three types of arcs.

For type (1) arcs or circles, observe that if we remove the plane $u_2^1$, then $(u_1,u_2)$ is exactly the
information for an anchored holomorphic cylinder between $\gamma_i$ and $\beta_1$, i.e.
$$(u_1,u_2)\in \M_c^{-A-A_i-B_1}(\gamma_i,\beta_1)\times \M^{B_1}(\beta;\alpha).$$ 
The winding of $f$ on this arc is determined by the $1$-parameter family of planes in $\M^{B_1}(\beta;\alpha)$.
However, we are interested not in $\langle\tau_1\theta\rangle^{A}(e^{A_i}\gamma_i)$, 
but $\langle\tau_1\theta\rangle^{A}(\sum_{i={1}}^{k}c_ie^{A_i}\gamma_i)$, which is a cycle in linearized
contact homology. Note that $\M_c^{-A-A_i-B_1}(\gamma_i,\beta_1)$ is exactly a term in the differential.
Therefore there will be another arc on the boundary of some other $\M^{-A-A_j}(\gamma_j; \alpha)$ which has
the form 
$$(u_1',u_2')\in \M_c^{-A-A_j-B_1}(\gamma_j,\beta_1)\times \M^{B_1}(\beta;\alpha),$$ 
for some $j$, but with the opposite sign.
It follows that the winding of $f$ on the type (1) arcs cancel out for $a\in HC(\partial M)$.

For type (2) arcs or circles, observe that $u_1$ is rigid, and so is $u_2^1$, therefore the gluing 
reparametrization is the same over the entire $1$-parameter family. Hence $f$ is constant and 
there is no contribution to the winding number.

For type (3) arcs or circles, once again after we remove the plane $u_2^1$, then $(u_1,u_2)$ is now
a $1$-parameter family of anchored holomorphic cylinder between $\gamma_i$ and $\beta_1$, exactly
as considered in the geometric description of the map $D$. When we glue $u_2^1$ to $u_1$, we must reparametrize
the domain of the rigid plane $u_2^1$ so that the limit of the 
positive $x$-axis of $u_2^1$ matches with $\ev^-(u_1)$. It follows that the winding number of $f$ on the
type (3) arcs or circles exactly agrees with the winding of number
$\ev^-(\M_c^{-A-A_i-B_1}(\gamma_i, \beta))$. Similar to the type (1) curves, we need to sum over all $i$.

Recall that the in the geometric description of the map $D$, moduli space $\M_1$ count the number of time
the arc $\ev^-(\M_c^{-A-A_i-B_1}(\gamma_i, \beta))$ crosses $0$. It can be thought of as an integral
approximation of the real winding number. We need to show that moduli space $\M_2$
exactly gives the right correction.

If a moduli space $\M_c^{-A-A_i-B_1}(\gamma_i, \beta)$ is a circle then the winding number is integral
already, and $\M_1$ counts correctly. If $\M_c^{-A-A_i-B_1}(\gamma_i, \beta)$ is a path, then as 
discuss above, the two end point are $3$-story curves. If an end is between a type (3) and a type (2) arc,
then since type (2) arcs do not change the winding number, we may continue onto the other end of
the type (2) arc, which must be a type (3) arc again. In other word, we may include type (2) 
curves as part of the type (3) curves. Therefore all ends of type (3) arcs are now between type (3)
and type (1) arcs. In particular, this means in the $3$-story curve $(u_1,u_2,u_3)$, the rigid 
plane $u_3^1$ with the marked point is not attached to a trivial cylinder in $u_2$. Hence both
$u_1$ and $u_2$ are anchored holomorphic cylinders. 

From now on we will use the short hand that $u(\delta_1, \delta_2)$ to be an anchored holomorphic
cylinder between the orbits $\delta_1$, and $\delta_2$ in the suitable homology class, and $u(\delta)$
to be a rigid plane asymptotic to $\delta$ with the constrain of passing through $\alpha$.

Suppose$\M_c^{-A-A_i-B_1}(\gamma_i, \beta)$ has two ends $(u_1(\gamma_i,\delta),u_2(\delta, \beta))$,
and $(u'_1(\gamma_i,\delta),u'_2(\delta,\beta))$. Furthermore assume 
$\M_c^{-A-A_i-B_1}(\gamma_i, \beta)$ is
oriented in that direction. Since $\M_1$ already counted the integral part of the winding number of 
$\ev^-(\M_c^{-A-A_i-B_1}(\gamma_i, \beta))$, the fractional part left is simply 
$$\ev^-(u'_1(\gamma_i,\delta),u'_2(\delta,\beta)) - \ev^-(u_1(\gamma_i,\delta),u_2(\delta, \beta))$$
where each number is normalized to be in the range $[0,2\pi)$.

We also have 
\begin{equation}\label{mod}
\ev^-(u_1(\gamma_i,\delta),u_2(\delta,\beta)) = \ev^-(u_1(\gamma_i,\delta)) + \ev^-(u_2(\delta,\beta))\mod 2\pi
\end{equation}
where $\ev^-(u_1(\gamma_i,\delta))$ and $\ev^-(u_2(\delta,\beta))$ are normalized to lie in $[0,2\pi)$ as well.
Similarly for $u'$. Let us assume for now the all $\ev^-(u(\gamma_1,\gamma_2))$ are small positive numbers, so that
\eqref{mod} is actual equality. 
This means every $\{0, \ev^-(u_1),  \ev^+(u_2)\}$ lies in clockwise order, so $\M_2$ is empty. 

On the other hand, in this case the sum of the fraction winding numbers is
\begin{equation}\label{sum}
\sum_{\gamma_i,\delta, \beta}\ev^-(u'_1(\gamma_i,\delta)) + \ev^-(u'_2(\delta,\beta))- \ev^-(u_1(\gamma_i,\delta)) - \ev^-(u_2(\delta,\beta))
\end{equation}

For each particular cylinder $v(\delta,\beta)$, since $a$ is a cycle in $HC(\partial M)$, every time $v(\delta,\beta)$
appears as part of the start of a moduli space $(u_1(\gamma_i,\delta),v(\delta, \beta))$, there is a canceling 
$(u'_1(\gamma_j,\delta),v(\delta, \beta))$ where it appears with the opposite sign as the end of a moduli space.
Hence the contribution of $\ev^-(v(\delta,\beta))$ is $0$ in the sum \eqref{sum}.

Similarly suppose a cylinder $v(\gamma_i,\delta)$ appears in $(v(\gamma_i,\delta),u_2(\delta, \beta))$ as the start of
a moduli space. Recall that there is a rigid plane $u(\beta)$ on the base of the $3$-story curve 
$$(v(\gamma_i,\delta),u_2(\delta, \beta),u(\beta)).$$ 
Glue $u_2(\delta, \beta)$ and $u(\beta)$, this is an end of a $1$-parameter family of planes 
asymptotic to $\delta$. Let the other end of this family be $(u'_2(\delta, \beta'),u'(\beta))$. 
Then 
$(v(\gamma_i,\delta),u'_2(\delta, \beta'),u'(\beta'))$ appears with the opposite sign to $(v(\gamma_i,\delta),u_2(\delta, \beta),u(\beta))$.
Therefore the contribution of each $\ev^-(v(\gamma_i,\delta))$ in the sum \eqref{sum} is
$0$ as well.

It follows that the fractional winding numbers is also $0$, if \eqref{mod} is actual equality.
Now it is easy to see that every time 
$\{0, \ev^-(u_1),  \ev^+(u_2)\}$ lies in anticlockwise order, it means 
$\ev^-(u_1(\gamma_i,\delta)) + \ev^-(u_2(\delta,\beta)) > 2\pi$, so in the sum \eqref{sum}
we replace $\ev^-(u_1(\gamma_i,\delta)) + \ev^-(u_2(\delta,\beta))$ by $\ev^-(u_1(\gamma_i,\delta)) + \ev^-(u_2(\delta,\beta)) -2\pi$
and therefore the winding number changes by one, exactly corresponding to the inclusion of that
broken cylinder to $\M_2$. 

We showed that geometric count of the moduli spaces $\M_1$ and $\M_2$ exactly matches the winding number of the
derivative map $f$. Theorem \eqref{main} follows immediately.\end{proof}

\newpage
\section*{Acknowledgements}
The author thanks F. Bourgeois, Y. Eliashberg K. Honda, S. Lisi and O. Van Koert 
for many helpful suggestions and valuable comments. This work was written
during the author's stay at the Simons Institute at Stony Brook University.
This work was partially supported by the ERC 
Starting Grant of Frederic Bourgeois StG-239781-ContactMath.

\newpage


\begin{thebibliography}{}

\bibitem[Bou04]{Bourgeois}
F. Bourgeois.
\emph{A Morse-Bott approach to contact homology}.
Ph.D. Thesis.

\bibitem[BEE12]{surgeryformula}
F. Bourgeois, T. Ekholm and Y. Eliashberg.
\emph{Effect of Legendrian surgery}.
Geom. \& Topol. {\bf 16} (2012), 301--389.

\bibitem[BEHWZ03]{compactness}
F.Bourgeois, Y. Eliashberg, H. Hofer, K. Wysocki and E. Zehnder.
\emph{Compactness results in symplectic field theory}.
Geom. \& Topol. {\bf 7} (2003), 799--888.

\bibitem[BO09]{Oancea}
F. Bourgeois and A. Oancea.
\emph{An exact sequence for contact and symplectic homology}.
Invent. Math. {\bf 175} (2009), no. 3, 611--680

\bibitem[BO]{S1symplectic}
\emph{The Gysin exact sequence for $S^1$-equivariant symplectic homology}.
Preprint 2009.

\bibitem[Cie02]{Cieliebak}
K. Cieliebak.
\emph{Subcritical manifolds are split}.
Preprint 2002.

\bibitem[Eli90]{EliashbergStein}
Y. Eliashberg.
\emph{Topological characterization of Stein manifolds of dimension $> 2$}.
Int. J. of Math. {\bf 1} (1990), no. 1, 29--46.

\bibitem[EGH00]{EliashbergGiventalHofer}
Y. Eliashberg, A. Givental and H. Hofer.
\emph{Introduction to symplectic field theory}.
Geom. Funct. Anal. 2000, 560--673.

\bibitem[Fab10]{Fabert}
O. Fabert.
\emph{Gravitational descendants in symplectic field theory}.
Comm. Math. Phys. {\bf 302} (2011), no. 1, 113--159. 

\bibitem[He12]{He}
J. He.
\emph{Correlators and descendants of subcritical Stein manifolds}.
Accepted.

\bibitem[HWZ96]{HoferWysockiZehnder}
H. Hofer, K. Wysocki and E. Zehnder.
\emph{Properties of pseudo-holomorphic curves in
symplectization. I. Asymptotics}.
Ann. Inst. H. Poincar\'{e} Anal. Non Linaire {\bf 13}
(1996), no. 3, 337--379.

\bibitem[HWZ06]{polyfolds}
H. Hofer, K. Wysocki and E. Zehnder.
\emph{A General Fredholm Theory I: A Splicing-Based Differential Geometry}.
To appear Journal of the European Mathematical Society.

\bibitem[HWZ07]{polyfolds2}
H. Hofer, K. Wysocki and E. Zehnder.
\emph{A General Fredholm Theory II: Implicit Function Theorems}.
Preprint 2007.

\bibitem[HWZ08]{polyfolds3}
H. Hofer, K. Wysocki and E. Zehnder.
\emph{A General Fredholm Theory III: Fredholm Functors and Polyfolds}.
Preprint 2008.

\bibitem[HWZ11]{GWpolyfold}
H. Hofer, K. Wysocki and E. Zehnder.
\emph{Applications of Polyfold Theory I: The Polyfolds of Gromov--Witten Theory}.
Preprint 2011.

\bibitem[KM98]{kont}
M. Kontsevich and Y. Manin.
\emph{Relations Between the Correlators of the Topological 
Sigma-Model Coupled to Gravity}.
Comm. Math. Phys. {\bf 196} (1998), no. 2, 385--398.

\end{thebibliography}
\end{document}